\newtheorem{theorem}{Theorem}[section]
\theoremstyle{definition}
\newtheorem{thm}{Theorem}[section]
\newtheorem*{thm*}{Theorem}
\newtheorem{prop}[thm]{Proposition}
\newtheorem*{prop*}{Proposition}
\newtheorem*{dfn*}{Definition}
\theoremstyle{remark}
\newtheorem{rem}[theorem]{Remark}
\numberwithin{equation}{section}
\renewcommand{\labelenumi}{(\arabic{enumi})}
\newcommand{\abs}[1]{\left\vert#1\right\vert}
\newcommand{\set}[1]{\left\{#1\right\}}
\newcommand{\brac}[1]{\left(#1\right)}
\newcommand{\Real}{\mathbb{R}}
\newcommand{\eps}{\varepsilon}
\newcommand{\E}{\mathbb{E}}
\renewcommand{\S}{\mathbb{S}}
\newcommand{\I}{\mathcal{I}}
\renewcommand{\H}{\mathcal{H}}
\newcommand{\J}{\mathcal{J}}
\newlength{\defbaselineskip}
\newcommand{\setlinespacing}[1]%
           {\setlength{\baselineskip}{#1 \defbaselineskip}}
\begin{document}

\title{Isoperimetric Bounds on Convex Manifolds}

\author{Emanuel Milman\textsuperscript{1}}

\footnotetext[1]{Department of Mathematics, University of Toronto, 40 St. George Street, Toronto, Ontario M5S 2E4, Canada. Email: emilman@math.toronto.edu.\\
2010 Mathematics Subject Classification: 60E15, 46G12, 60B99.
}


\maketitle

\begin{abstract}
We extend several Cheeger-type isoperimetric bounds for convex sets in Euclidean space, due to Bobkov and Kannan--Lov\'asz--Simonovits, to Riemannian manifolds having non-negative Ricci curvature. In order to extend Bobkov's bound, we require in addition an upper bound on the \emph{sectional} curvature of the space, which permits us to use comparison tools in Cartan--Alexandrov--Toponogov (or CAT) spaces. Along the way, we also
quantitatively improve our previous result that weak concentration assumptions imply a Cheeger-type isoperimetric bound, to a sharp bound with respect to all parameters.
\end{abstract}

\section{Introduction}

This work revolves around a new generalization to the Riemannian setting of the following theorem due to Sergey Bobkov \cite{BobkovVarianceBound} (we use the formulation from \cite{EMilman-RoleOfConvexity}, which is formally stronger but ultimately equivalent in the cases of interest):

\begin{thm*}[Bobkov]
Let $K$ denote a convex bounded domain in Euclidean space $(\Real^n,\abs{\cdot})$.
Let $X$ denote a random point uniformly distributed in $K$ (with respect to Lebesgue measure). Then for any $x_0 \in \Real^n$, denoting $E_{x_0} = \E(|X-x_0|)$ and $S_{x_0} = \S(|X-x_0|)$, we have:
\[
 D_{Che}(K) \geq \frac{c}{\sqrt{E_{x_0} S_{x_0}}} ~,
\]
for some universal constant $c>0$.
\end{thm*}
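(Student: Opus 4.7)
The plan is to reduce the $n$-dimensional isoperimetric problem on $K$ to a one-dimensional question via the localization technique of Lov\'asz--Simonovits and Kannan--Lov\'asz--Simonovits, and then to settle the one-dimensional case by direct analysis of log-concave densities on intervals.

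First I would reformulate the desired bound $D_{Che}(K) \geq c/\sqrt{E_{x_0} S_{x_0}}$ as a ``four-function'' integral inequality over $K$ of the form required by the localization machinery. Concretely, fix a set $A \subset K$ with $\mu(A) \leq 1/2$ and a narrow boundary tube $A_\varepsilon \setminus A$, and package this partition data together with the two moment integrals $\int_K |x-x_0|\,d\mu$ and $\int_K |x-x_0|^2\,d\mu$ (which jointly encode $E_{x_0}$ and $S_{x_0}$) into a single integral inequality equivalent to the Cheeger bound. The localization lemma then reduces the statement to its analogue on one-dimensional ``needles''---log-concave probability measures $\nu$ supported on line segments $\ell \subset \Real^n$. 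The point of carrying the moment integrals as constraints is to ensure that each needle $\nu$ inherits bounded versions $\tilde E_{x_0}, \tilde S_{x_0}$ of the global quantities, up to universal constants.

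The remaining one-dimensional problem is to show, for any log-concave probability density $\rho$ on an interval $I \subset \Real$, that $D_{Che}(\rho\, dx) \geq c / \sqrt{\tilde E\, \tilde S}$, where $\tilde E$ and $\tilde S$ are the mean and standard deviation of $|t - x_0|$ under $\rho$. This follows from the explicit one-dimensional formula $D_{Che}(\rho\,dx) = \inf_t \rho(t)/\min(F(t), 1-F(t))$ (with $F$ the cumulative distribution function) combined with log-concavity of $\rho$ and elementary integration-by-parts estimates linking the infimum to the first two moments of $|t - x_0|$.

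The main obstacle is the localization step: the quantities $E_{x_0}$ and $S_{x_0}$ do not decompose automatically across needles, so one must invoke a higher-order (four-constraint) version of the localization lemma, or run an iterative refinement in which one first bisects by volume and then by moments. A secondary subtlety lies in calibrating the one-dimensional bound so that the constants do not degrade when the needles are reassembled into the global inequality; this is where the precise form $\sqrt{E_{x_0} S_{x_0}}$ (rather than, say, the cruder $E_{x_0}$ or $\sqrt{E_{x_0}^2 + S_{x_0}^2}$) must be tracked carefully through the reduction.
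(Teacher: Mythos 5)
Your overall direction---localization down to log-concave needles plus a one-dimensional computation---is the spirit of Bobkov's original argument, but as written the proposal has a genuine gap exactly at the point you yourself flag as ``the main obstacle,'' and that obstacle is not a technicality: it is the entire difficulty of the theorem. The Lov\'asz--Simonovits localization lemma reduces to needles a statement of the form ``$\int g \, d\mu > 0$ and $\int h \, d\mu > 0$ imply a contradiction,'' i.e.\ it carries \emph{two} linear integral constraints. There is no ``four-constraint'' version: the bisection mechanism underlying the lemma exhausts its degrees of freedom in preserving two linear functionals, and with more constraints the limiting objects are no longer one-dimensional. Worse, $S_{x_0}$ is not a linear functional of the measure at all---it is $\sqrt{\E(|X-x_0|^2) - (\E|X-x_0|)^2}$, involving the square of an integral---so it cannot be encoded as a sign condition $\int g\,d\mu > 0$ in the first place. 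Consequently your assertion that ``each needle $\nu$ inherits bounded versions $\tilde E_{x_0}, \tilde S_{x_0}$ of the global quantities, up to universal constants'' is unsubstantiated and, in the direction you need for reassembly (namely $\tilde E \tilde S \leq C\, E_{x_0} S_{x_0}$ on every needle), simply false without further work: a single needle can see a much larger spread of $|x - x_0|$ than the global variance records. This is precisely why Bobkov does \emph{not} localize the variance directly; he first proves (by a separate, two-constraint localization) a reverse H\"older/distributional inequality for the function $|X-x_0|$ on convex bodies, uses it to show that most of the mass of $K$ lies in an annulus of width comparable to $S_{x_0}$ around radius $E_{x_0}$, and only then invokes the Kannan--Lov\'asz--Simonovits bound on the truncated body. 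Without that intermediate ingredient (or a substitute), your scheme does not close.

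For comparison, the present paper recovers the statement (as the case $\kappa = 0$ of Theorem \ref{thm:main2'}) by an entirely localization-free route: Chebyshev's inequality places $\mu_K$-measure at least $3/4$ in $K_{x_0} = K \cap B(x_0, E_{x_0} + 2S_{x_0})$, which reduces matters to bounding $D_{Che}(K_{x_0})$; the generalized KLS bound $D_{Che} \geq c / \E\,\theta_{K_{x_0}}(X)$ (proved via the Heintze--Karcher comparison for the normal exponential map off an isoperimetric minimizer, Theorem \ref{thm:KLS-II}) is then combined with the elementary chord estimate $\theta_{K_{x_0}}(x) \leq 2\sqrt{R^2 - |x - x_0|^2}$ and a Chebyshev tail bound to get $\E\,\theta_{K_{x_0}}(X) \leq C\sqrt{E_{x_0} S_{x_0}}$. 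If you want to salvage a localization proof, you should follow Bobkov and treat the variance through the polynomial small-ball inequality rather than trying to carry it through the localization itself.
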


Let us explain the notation used above. We denote the expectation of a random variable $Y$ by $\E(Y)$, and set $\S(Y) := \sqrt{\E((Y - \E(Y))^2)}$ to denote the square root of the variance. We will use $D_{Che}(\Omega,d,\mu)$ to denote the Cheeger constant of the measure-metric space $(\Omega,d,\mu)$, where $d$ is a separable metric on $\Omega$ and $\mu$ is a Borel probability measure on $(\Omega,d)$. Given such a space, Minkowski's
(exterior) boundary measure of a Borel set $A \subset \Omega$, denoted $\mu^+(A)$, is
defined as $\mu^+(A) := \liminf_{\eps \to 0} \frac{\mu(A^d_{\eps}) -
\mu(A)}{\eps}$, where $A^d_{\eps} = A^{\Omega,d}_{\eps} := \set{x \in \Omega ; \exists y
\in A \;\; d(x,y) < \eps}$ denotes the $\eps$-neighborhood of $A$ in $(\Omega,d)$.
The isoperimetric profile $\I =
\I_{(\Omega,d,\mu)}$ is defined as the function $\I: [0,1] \rightarrow \Real_+$ given by $\I(v) = \inf \set{ \mu^+(A) ; \mu(A) = v}$. The Cheeger constant is then defined as:
\[
D_{Che}(\Omega,d,\mu) := \inf_{v \in [0,1]} \frac{\I_{(\Omega,d,\mu)}(v)}{\min(v,1-v)} = \inf_{A \subset \Omega} \frac{\mu^+(A)}{\min(\mu(A),1-\mu(A))} ~,
\]
measuring a certain linear isoperimetric property of the space $(\Omega,d,\mu)$.
When $d$ and $\mu$ are implied from the context, we simply write $D_{Che}(\Omega)$. So $D_{Che}(K)$ above denotes the Cheeger constant of $K$ with respect to the Euclidean metric $d = \abs{\cdot}$ and the uniform probability measure $\mu_K$ on $K$. Note that in this case, when $A \subset K$ has smooth boundary, then $\mu_K^+(A) = \H^{n-1}(\partial A \cap K) / \H^n(K)$, where $\H^k$ denotes the $k$-dimensional Hausdorff measure.

\medskip

Bobkov's proof in \cite{BobkovVarianceBound} is based on a geometric localization method,
systematically developed by Kannan, Lov\'asz and Simonovits \cite{LSLocalizationLemma,KLS}, and on a reverse H\"{o}lder inequality for $L_p$ norms of polynomials on convex sets \cite{Bourgain-LK,BobkovPolynomials,CarberyWrightPolynomials,NazarovSodinVolbergPolynomials}, which may also be proved by using the localization technique. A more geometric proof of Bobkov's theorem was given in \cite[Theorem 5.15]{EMilman-RoleOfConvexity}, relying on a general bound on $D_{Che}(K)$ obtained by Kannan--Lov\'asz--Simonovits \cite{KLS} using again localization.
The localization method has its origins in the work of Payne and Weinberger \cite{PayneWeinberger}, and was also rediscovered by Gromov and V. Milman \cite{Gromov-Milman}. The main drawback of this method is that, although being a very useful tool in the Euclidean setting, it can only be extended to very specific Riemannian manifolds (e.g. the Euclidean sphere, as in \cite{Gromov-Milman}).
This perhaps explains the difficulty in generalizing Bobkov's theorem to the more general Riemannian setting.

\medskip

Our main result generalizes Bobkov's theorem to the manifold setting, by employing the geometric method developed in \cite{EMilmanGeometricApproachPartI} and thereby avoiding the use of localization (see Remark \ref{rem:locVsJac} for a comparison with the latter method). In fact, we extend to this setting the Kannan--Lov\'asz--Simonovits result mentioned above as follows:

\begin{dfn*}
Given a bounded subset $\Omega$ of a complete Riemannian manifold $(M,g)$ and $x \in \Omega$, we denote by $\theta_\Omega(x)$ the Riemannian length of the longest minimizing geodesic contained in $\Omega$ and centered at $x$. In other words:
\[
 \theta_\Omega(x) := 2 \sup_{\xi \in S_x} \sup \set{t \geq 0 \; ; \begin{array}{c} \forall s \in [-t,t] \;\; \exp_x(s \xi) \in \Omega \text{ and } \\ d(\exp_x(-t\xi),\exp_x(t\xi)) = 2t \end{array} } ~,
\]
where $S_x$ denotes the unit sphere in the tangent space $T_x M$ and $d$ is the induced metric on $(M,g)$.
\end{dfn*}

\begin{thm}[generalized KLS bound] \label{thm:KLS-II}
Let $(M,g)$ denote a complete smooth oriented Riemannian manifold with non-negative Ricci curvature. Let $K$ denote a geodesically convex bounded domain in $(M,g)$ with (possibly empty) $C^2$ boundary,
and let $X$ denote a random point uniformly distributed in $K$ (with respect to the Riemannian volume form $vol_M$). Then:
\[
 D_{Che}(K,d,\mu_K) \geq \frac{c}{\E \theta_{K}(X)} ~,
\]
for some universal constant $c>0$.
\end{thm}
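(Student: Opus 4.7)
The plan is to decouple the argument into two components: an abstract ``concentration $\Rightarrow$ Cheeger'' principle valid under $\mathrm{Ric}\geq 0$ and geodesic convexity, and a concrete geometric bound of a concentration parameter in terms of $\E\theta_K(X)$. The first component is the sharpened form of the implication from the geometric framework of \cite{EMilmanGeometricApproachPartI}, whose optimal version is precisely the quantitative improvement announced in the abstract. Concretely, it says that a bound of the form $\S f\leq M$ for every $1$-Lipschitz $f:K\to\Real$ upgrades automatically to $D_{Che}(K,d,\mu_K)\geq c/M$, proved by Bakry--\'Emery / heat-semigroup arguments that avoid localization entirely and exploit only $\mathrm{Ric}\geq 0$ together with geodesic convexity of $K$.

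Given this engine, the new geometric input for Theorem~\ref{thm:KLS-II} is the concentration estimate $\sup_f \S f\leq C\,\E\theta_K(X)$, the supremum being over $1$-Lipschitz $f:K\to\Real$. I would obtain it from the coupling identity $(\S f)^2\leq\tfrac12\,\E\,d(X,Y)^2$ with $X,Y$ i.i.d.\ copies of $\mu_K$, reducing matters to showing
\[
\E\,d(X,Y)^2 \leq C\brac{\E\,\theta_K(X)}^2.
\]
The geometric key is that, for any $X,Y\in K$, geodesic convexity of $K$ keeps a minimizing geodesic between them inside $K$, and for the midpoint $Z=Z(X,Y)$ of that geodesic one has $d(X,Y)\leq\theta_K(Z)$ directly from the definition of $\theta_K$. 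Integrating out the midpoint law then reduces the task to a comparison between the law of $Z$ under $\mu_K\otimes\mu_K$ and $\mu_K$ itself, which one effects using the displacement-convexity / Brunn--Minkowski framework of Cordero-Erausquin--McCann--Schmuckenschl\"ager (or Sturm--Lott--Villani) available under $\mathrm{Ric}\geq 0$.

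The main obstacle is to carry out this midpoint-to-marginal comparison with a \emph{dimension-free} constant. A crude pointwise density comparison costs a factor $2^n$ (even in Euclidean space), which would destroy the bound; the right approach is to compare integrals of $\theta_K$ rather than densities, exploiting Ricci non-negativity to obtain a dimension-independent estimate in the spirit of the semigroup / transport framework underlying the first component. Once this comparison is achieved, chaining it with the abstract concentration-to-isoperimetry principle yields the stated Cheeger bound.
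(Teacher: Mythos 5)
Your high-level architecture matches the paper's: a ``concentration $\Rightarrow$ Cheeger'' engine under $\mathrm{Ric}\geq 0$ plus convexity (this is exactly Theorem \ref{thm:small-conc}; note, though, that the paper proves its sharp form via isoperimetric minimizers and Heintze--Karcher, and explicitly states that it does \emph{not} know how to obtain the sharp constant by Ledoux's semigroup route --- harmless here, since any universal constant suffices), followed by a geometric estimate of the concentration parameter by $\E\theta_K(X)$. The problem is that the second step --- the only genuinely new content of Theorem \ref{thm:KLS-II} --- is precisely where your argument has an unresolved gap, and you acknowledge as much. You reduce to $\E\,d(X,Y)^2\leq C(\E\theta_K(X))^2$ via the correct observation $d(X,Y)\leq\theta_K(Z)$ at the geodesic midpoint $Z$, but this leaves two unproven comparisons: (i) $\E_{\nu}[\theta_K^2]\leq C\,\E_{\mu_K}[\theta_K^2]$, where $\nu$ is the law of the midpoint of two independent $\mu_K$-points, and (ii) the reverse H\"older inequality $\E_{\mu_K}[\theta_K^2]\leq C(\E_{\mu_K}\theta_K)^2$. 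Neither is standard: (i) fails for general nonnegative test functions (the midpoint density exceeds the uniform one by a factor $2^n$ near the center of a symmetric body, so one must use specific structure of $\theta_K$, which you do not identify --- displacement convexity under $\mathrm{Ric}\geq 0$ gives \emph{lower}, not upper, bounds on midpoint densities); and (ii) is not automatic either, since $\theta_K$ is a supremum of concave chord-functions over directions, hence neither concave nor of the Borell type for which dimension-free reverse H\"older inequalities are known.

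The paper's resolution avoids the two-point coupling entirely. It takes the isoperimetric minimizer $A$ with $\mu_K(A)=1/2$, arranges $H_A\leq 0$, and parametrizes $K\setminus A$ (up to a null set) by the normal exponential map from $\partial_r A$ up to the cut locus; the Heintze--Karcher inequality then makes the Jacobian $J_A(x,\cdot)$ \emph{non-increasing} along each normal geodesic. The key substitute for your midpoint comparison is the elementary inequality $\int_0^e\varphi(s)\,s\,ds\leq 4\int_0^e\varphi(s)\min(s,e-s)\,ds$ for non-increasing $\varphi$, combined with the pointwise bound $2\min(s,\mathrm{cut}^K_A(x)-s)\leq\theta_K(\exp_x(s\nu_A(x)))$ (each normal geodesic up to its cut point is minimizing and stays in $K$). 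Because the normal exponential map restricted to $DCut(A)$ is a measure-preserving diffeomorphism onto its full-measure image, the resulting identity (\ref{eq:integration}) converts this back into $\int d(x,A)\,d\mu_K\leq 2\int\theta_K\,d\mu_K$ with \emph{no} dimensional loss --- the ``needles'' are normal geodesics from $\partial_r A$, not chords between random pairs of points. This exact change of variables is the idea your proposal is missing; without it (or a proof of your comparisons (i) and (ii)), the argument does not close.
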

\noindent
This theorem in the Euclidean setting is due to Kannan--Lov\'asz--Simonovits \cite{KLS}.

\medskip

In order to control $\theta_K(x)$, we employ comparison results for geodesic triangles in Cartan--Alexandrov--Toponogov (or $CAT$) spaces, to which end we require in addition an \emph{upper} bound on the \emph{sectional} curvature. We formulate this in greater generality in Section \ref{sec:main2}, and state the result for the time being as follows (we employ throughout this work the convention that $1 / 0 = +\infty$):

\begin{thm}[Generalized Bobkov bound] \label{thm:main2}
Let $(M,g)$ denote a complete oriented smooth Riemannian manifold with non-negative Ricci curvature, let $d$ denote the induced geodesic distance, and let $\kappa \geq 0$ denote an upper bound on the sectional curvatures in $(M,g)$. Assume that one of the following assumptions holds:
\renewcommand{\labelenumi}{(\Alph{enumi})}
\begin{enumerate}
 \item The injectivity radius $\text{inj}(M)$ of $(M,g)$ is at least $\pi / \sqrt{\kappa}$ ; or,
\item The shortest geodesic loop in $(M,g)$ has length at least $2 \pi / \sqrt{\kappa}$ ; or,
\item $(M,g)$ is compact, even-dimensional, with strictly positive sectional curvatures.
\end{enumerate}
\renewcommand{\labelenumi}{(\arabic{enumi})}
Let $K$ denote a geodesically convex bounded domain of $(M,g)$ with (possibly empty) $C^2$ boundary, and let $X$ denote a random point uniformly distributed in $K$ (with respect to the Riemannian volume form $vol_M$). Given $x \in M$, denote $E_{x} = \E(d(X,x))$ and $S_{x} = \S(d(X,x))$, and set $R_x = E_x + 2 S_x$. Let $x_0 \in M$ be any point such that:
\begin{enumerate}
\item $R_{x_0} < \pi/(2\sqrt{\kappa})$ ;
\item Denoting $K_{x_0} = K \cap B(x_0,R_{x_0})$, there exists $\eps_0>0$ so that for any $\eps \in (0,\eps_0)$, there exists a domain $K_{x_0,\eps}$ so that:
\begin{enumerate}
 \item $K_{x_0} \subset K_{x_0,\eps} \subset (K_{x_0})^d_\eps$ ;
\item $\partial K_{x_0,\eps}$ is $C^{2}$ smooth ;
\item $K_{x_0,\eps}$ is geodesically convex.
\end{enumerate}
\end{enumerate}
Then:
\[
D_{Che}(K) \geq c \frac{\sqrt{1 - \frac{2}{\pi} E_{x_0} \sqrt{\kappa}} }{\sqrt{E_{x_0} S_{x_0}}} ~,
\]
for some universal constant $c>0$.
\end{thm}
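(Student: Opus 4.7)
The plan is to combine Theorem~\ref{thm:KLS-II} with a comparison-geometry estimate for geodesic chord lengths. By Theorem~\ref{thm:KLS-II}, the desired Cheeger bound is implied by an estimate of the form
\[
\E\,\theta_K(X) \;\leq\; \frac{C\,\sqrt{E_{x_0}S_{x_0}}}{\sqrt{1-\tfrac{2}{\pi}E_{x_0}\sqrt{\kappa}}}.
\]
Since Theorem~\ref{thm:KLS-II} requires a $C^2$-smooth geodesically convex domain, I first truncate: by Chebyshev applied to $d(X,x_0)-E_{x_0}$, the set $K_{x_0}=K\cap B(x_0,R_{x_0})$ satisfies $\mu_K(K_{x_0})\geq 3/4$, and the hypothesised approximants $K_{x_0,\eps}$ supply smooth geodesically convex domains (close to $K_{x_0}$ in the Hausdorff sense by (2)(a)) to which Theorem~\ref{thm:KLS-II} applies directly. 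The quantitatively improved weak-concentration-implies-Cheeger bound promised in the abstract then lets me transfer the resulting Cheeger estimate from $K_{x_0,\eps}$ back to $K$ at the cost of an absolute constant.

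The core step is a pointwise bound on $\theta_{K_{x_0,\eps}}(x)$ via CAT$(\kappa)$-comparison. Under any of (A), (B), (C), combined with $R_{x_0}<\pi/(2\sqrt{\kappa})$, geodesic triangles contained in $B(x_0,R_{x_0})$ admit a Toponogov-type comparison to the corresponding triangle in $\mathbb{S}^2_{\kappa}$. For a minimizing geodesic segment $\gamma\subset K_{x_0,\eps}$ of length $c$ centered at $x$ with endpoints $y_1,y_2$, I apply the CAT comparison to the triangle $\triangle(x_0,y_1,y_2)$, with $x$ identified as the midpoint of the side $y_1y_2$. Writing $a=d(y_1,x_0)$, $b=d(y_2,x_0)$, $r=d(x,x_0)$ (all bounded by $R_{x_0}$), the spherical median identity together with the CAT upper bound on $r$ gives
\[
2\cos(\sqrt{\kappa}\,r)\cos(\sqrt{\kappa}\,c/2) \;\geq\; \cos(\sqrt{\kappa}\,a)+\cos(\sqrt{\kappa}\,b) \;\geq\; 2\cos(\sqrt{\kappa}\,R_{x_0}).
\]
Using the elementary identity $\cos\alpha-\cos\beta=2\sin\tfrac{\alpha+\beta}{2}\sin\tfrac{\beta-\alpha}{2}$ and $\arccos(1-u)\leq\sqrt{2u}$, this rearranges to the pointwise bound
\[
\theta_{K_{x_0,\eps}}(x) \;\leq\; 2\sqrt{\frac{R_{x_0}^2-d(x,x_0)^2}{\cos(\sqrt{\kappa}\,d(x,x_0))}},
\]
which specialises to the classical Euclidean midpoint bound $2\sqrt{R^2-d^2}$ as $\kappa\to 0$.

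Averaging over the uniform measure on $K_{x_0,\eps}$ via Cauchy--Schwarz gives
\[
\E\,\theta_{K_{x_0,\eps}}(X) \;\leq\; 2\sqrt{\E\bigl(R_{x_0}^2-d(X,x_0)^2\bigr)_+}\,\cdot\,\sqrt{\E\,\tfrac{1}{\cos(\sqrt{\kappa}\,d(X,x_0))}}.
\]
For the first factor, writing $R^2-d^2=(R-d)(R+d)$ and using $\E(R_{x_0}-d(X,x_0))_+\lesssim S_{x_0}$ (from $R_{x_0}-E_{x_0}=2S_{x_0}$ and $\E|d-E_{x_0}|\leq S_{x_0}$) together with $R_{x_0}+d\leq 2R_{x_0}\lesssim E_{x_0}$ (assuming, w.l.o.g., $S_{x_0}\lesssim E_{x_0}$), one extracts an $O(E_{x_0}S_{x_0})$ contribution. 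For the second factor, the concavity estimate $\cos\theta\geq 1-\tfrac{2}{\pi}\theta$ on $[0,\pi/2]$ combined with a careful decomposition of $K_{x_0}$ into a bulk region (where $d(X,x_0)\approx E_{x_0}$) and a tail (absorbable thanks to the weak concentration) yields the target spherical factor $1/\sqrt{1-\tfrac{2}{\pi}E_{x_0}\sqrt{\kappa}}$.

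The main obstacle is twofold. First, verifying that each of the hypotheses (A), (B), (C) actually delivers a genuine CAT$(\kappa)$ comparison in $B(x_0,R_{x_0})$: classical Toponogov requires ruling out short closed geodesics, which is secured by (A) via the injectivity radius, by (B) via the loop-length hypothesis, and by (C) via Synge's theorem on even-dimensional positively-curved manifolds, and each case has to be handled separately. Secondly, arranging for precisely $E_{x_0}\sqrt{\kappa}$ in the spherical correction, rather than the naive $R_{x_0}\sqrt{\kappa}=(E_{x_0}+2S_{x_0})\sqrt{\kappa}$ that a uniform pointwise bound on $1/\cos(\sqrt{\kappa}\,d(X,x_0))$ produces, requires the sharp concavity estimate for $\cos$ combined with either a finer truncation threshold or a bulk-versus-tail decomposition in the averaging step. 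A tertiary technical point is that the pointwise CAT bound must remain stable as $\eps\to 0$, which is exactly the content of hypothesis~(2)(a).
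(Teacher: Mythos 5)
Your overall architecture is the paper's: truncate to $K_{x_0}=K\cap B(x_0,R_{x_0})$ via Chebyshev, pass to the smooth convex approximants $K_{x_0,\eps}$, verify the $CAT(\kappa)$ property under (A)/(B)/(C), apply Theorem \ref{thm:KLS-II}, and bound $\theta$ by comparison with the spherical model. Your pointwise estimate is correct and is exactly Proposition \ref{prop:CAT} in disguise (the median identity you use is the sum of the two law-of-cosines relations at the midpoint, where the two angles sum to $\pi$); the case $S_{x_0}\gtrsim E_{x_0}$ is indeed dispatchable by Borell's lemma as you assume. The genuine gap is in the averaging step. Your Cauchy--Schwarz factorization requires $\E\bigl[1/\cos(\sqrt{\kappa}\,d(X,x_0))\bigr]\lesssim 1/(1-\tfrac{2}{\pi}E_{x_0}\sqrt{\kappa})$, and this is false under the hypotheses: the only constraint near the outer radius is $R_{x_0}<\pi/(2\sqrt{\kappa})$, with no quantitative gap, so $\cos(\sqrt{\kappa}R_{x_0})$ may be arbitrarily small while $\cos(\sqrt{\kappa}E_{x_0})$ stays of order one. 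Chebyshev only forces $\mu\{d(X,x_0)\geq E_{x_0}+tS_{x_0}\}\leq t^{-2}$, so a fixed positive fraction of the mass (say $1/4$) can sit at $d\approx R_{x_0}$, making your second factor blow up; the proposed ``tail absorbable thanks to weak concentration'' does not repair this, because the tail has constant-order measure, not small measure. The point you are losing is the anti-correlation of the two factors: where $\cos(\sqrt{\kappa}d)$ degenerates, $R_{x_0}^2-d^2$ vanishes at a compensating rate, and Cauchy--Schwarz destroys exactly this cancellation.

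The paper's mechanism keeps the two factors together. Writing $g(t)=\cos^{-1}\bigl(\cos(R_+\sqrt{\kappa})/\cos(t\sqrt{\kappa})\bigr)$ with $R_\pm=E_{x_0}\pm 2S_{x_0}$, one integrates $\int g(t)\,dF(t)$ by parts ($F$ the distribution function of $d(X,x_0)$): since $g$ is decreasing with $g(R_+)=0$, the contribution of the critical band $[R_-,R_+]$ is at most the single value $g(R_-)$, regardless of how the mass is distributed there, and $g(R_-)\lesssim \sqrt{\kappa}\sqrt{E_{x_0}S_{x_0}}/\sqrt{1-\tfrac{2}{\pi}E_{x_0}\sqrt{\kappa}}$ because $R_+-R_-=4S_{x_0}$ and $\cos(R_-\sqrt{\kappa})\geq\cos(E_{x_0}\sqrt{\kappa})$; on $[0,R_-]$ one uses Chebyshev's bound $F(t)\lesssim S_{x_0}^2/(E_{x_0}-t)^2$ against $|g'(t)|$. (Equivalently, you could salvage your scheme by splitting $g(d)\leq g(R_-)+g(d)\,\mathbb{1}_{\{d\leq R_-\}}$ \emph{before} any Cauchy--Schwarz: on $\{d\leq R_-\}$ the denominator $\cos(\sqrt{\kappa}d)\geq\cos(\sqrt{\kappa}E_{x_0})$ may be pulled out pointwise, and Jensen applied to $\sqrt{(R_+^2-d^2)_+}$ finishes; but some such decomposition keyed to the level $R_-=E_{x_0}-2S_{x_0}$ is essential and is the step your sketch does not actually supply.) A secondary imprecision: the transfer from $K_{x_0}$ back to $K$ is done via the restriction argument based on $D_{Che}=2\I(1/2)$ (as in the cited Lemma 5.2 of the convexity paper), not via the improved Theorem \ref{thm:small-conc}; the latter is used only in the degenerate case $E_{x_0}<2S_{x_0}$, where $K_{x_0}$ has diameter $O(S_{x_0})$.
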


\begin{rem}
As will be evident in Section \ref{sec:CAT}, under the upper bound assumption on the sectional curvature, assumption $(C)$ above implies assumption $(B)$ which implies assumption $(A)$; the latter assumption
implies in turn that $(M,g)$ is a $CAT(\kappa)$ space (see Theorem \ref{thm:CAT}). In such spaces, any open geodesic ball $B(x_0,R)$ with radius $R<\pi/(2 \sqrt{\kappa})$ is (strongly) geodesically convex (see e.g. \cite[II.1.4]{CAT0SpacesBook}
or the proof of Theorem \ref{thm:CAT}), implying in particular that $K_{x_0}$ is (strongly) geodesically convex for any $x_0 \in M$ satisfying condition (1) above. Consequently, condition (2) on $x_0$ is a mere technicality which we do not care to resolve here, and the reader is encouraged to morally disregard it.
\end{rem}

\begin{rem}
The constant $2$ in the definition $R_x = E_x + 2S_x$ is immaterial, and any other constant strictly greater than $1$ will yield the same result, with perhaps a different constant $c>0$ in the conclusion.
\end{rem}

Note that when $\kappa = 0$, we recover Bobkov's Theorem in the Euclidean setting (the case of $K$ with a non-smooth boundary may be obtained from the smooth boundary case by an approximation argument as in \cite[Section 6]{EMilman-RoleOfConvexity}). Although we have chosen to formulate for simplicity Theorem \ref{thm:main2} for the uniform probability measure on $K$ in a manifold with non-negative Ricci curvature, the same result holds for a more general probability measure of the form $\mu = \exp(-\psi) vol_M|_K$ with $Ric_g + D^2_g \psi \geq 0$ as $2$-tensors on $K$, where $Ric_g$ denotes the Ricci curvature tensor and $D_g$ denotes the covariant derivative on $(M,g)$ (we refer to \cite{EMilmanGeometricApproachPartI} for more details).

We also remark that since $1 - \frac{2}{\pi} E_{x_0} \sqrt{\kappa} > \frac{4}{\pi} S_{x_0} \sqrt{\kappa}$ according to condition (1), Theorem \ref{thm:main2} yields the looser bound:
\[
D_{Che}(K) \geq c_2 \frac{\kappa^{1/4}}{\sqrt{E_{x_0}}} \geq c_3 \sqrt{\kappa} ~,
\]
for any $x_0 \in M$ and $\kappa \geq 0$ satisfying the conditions of the theorem. It follows for instance that if $x_0 \in M$ satisfies:
\[
 E_{x_0} + 2 S_{x_0} < \min\brac{\frac{\pi}{2 \sqrt{\text{sec}(M)}} , \frac{\text{inj}(M)}{2}} ~,
\]
where $\text{sec}(M)$ denotes the supremum over the sectional curvatures of $M$, then:
\[
 D_{Che}(K) \geq \frac{c_4}{E_{x_0} + 2 S_{x_0}} \geq \frac{c_5}{E_{x_0}} ~;
\]
(see the proof of Theorem \ref{thm:main2'} for the last inequality). This should be compared to the general bound:
\[
 D_{Che}(K) \geq \sup_{x_0 \in M} \frac{c_5}{E_{x_0}} ~,
\]
obtained by Kannan--Lov\'asz--Simonovits for convex $K$ in the Euclidean setting, and extended to the Riemannian one in \cite{EMilman-RoleOfConvexity} assuming the Ricci curvature is non-negative, without any additional conditions.

\medskip

The rest of this work is organized as follows. In Section \ref{sec:warm}, we recall the geometric technique from \cite{EMilmanGeometricApproachPartI}, and quantitatively improve over \cite[Theorem 7.1]{EMilmanGeometricApproachPartI} using a new observation. Using it, together with a more refined analysis of the proof, we prove Theorem \ref{thm:KLS-II} in Section \ref{sec:KLS}. In Section \ref{sec:CAT}, we obtain several conditions for a domain in a Riemannian manifold to be a $CAT(\kappa)$ space, and derive a bound on $\theta_K$ in such spaces. Finally, in Section \ref{sec:main2}, we prove a more general version of Theorem \ref{thm:main2}.

\medskip

\noindent \textbf{Acknowledgements.} The author thanks the referee for useful comments that have improved the presentation of this work.


\section{Warm Up} \label{sec:warm}

Throughout this work, we denote by $(M,g)$ a complete oriented smooth Riemannian manifold, by $d$ the induced geodesic distance on $(M,g)$, and by $K$ a geodesically convex bounded domain in $(M,g)$ with (possibly empty) $C^2$ boundary. Recall that a subset $\Omega$ of $(M,g)$ is called geodesically convex if any two of its points may be connected by a minimizing geodesic which lies entirely inside $\Omega$; it is called strongly geodesically convex (or simply strongly convex) if any two of its points may be connected by a \emph{unique} minimizing geodesic in $M$, and this geodesic lies entirely inside $\Omega$. Clearly, the intersection of a geodesically convex domain and a strongly convex one is strongly convex. In addition, we will use $\mu_K$ to denote the uniform probability measure on $K$, i.e. $\mu_K = vol_M |_K / vol_M(K)$. Finally, given $p \in M$ and $R>0$, we denote by $B(p,R)$ the open geodesic ball of radius $R$ centered at $p$.

Another convention is that all constants denote some positive numeric values, independent of all other parameters (and in particular, independent of the dimension of $M$), whose value may change from one occurrence to the next.

The proof of Theorem \ref{thm:KLS-II} is an easy consequence of the method of proof of our previous result \cite[Theorem 7.1]{EMilmanGeometricApproachPartI}, which we can actually improve as follows:

\begin{thm}[Improved from \cite{EMilmanGeometricApproachPartI}] \label{thm:small-conc}
Assume that the $(M,g)$ has non-negative Ricci curvature, and that:
\begin{equation} \label{eq:conc-assump}
\exists \lambda_0 \in (0,1/2) \;\; \exists r_0 > 0 \;\; \text{such that} \;\; \mu_K(A) \geq 1/2 \Rightarrow 1-\mu_K(A^{d}_{r_0}) \leq \lambda_0 ~.
\end{equation}
Then:
\[
D_{Che}(K,d,\mu_K) \geq \frac{1-2\lambda_0}{r_0} ~.
\]
\end{thm}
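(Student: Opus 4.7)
The statement is a quantitative strengthening of \cite[Theorem 7.1]{EMilmanGeometricApproachPartI}, where the same implication (concentration $\Rightarrow$ Cheeger under non-negative Ricci and geodesic convexity) was proved with a non-sharp universal constant. My plan is to follow the general structure of that proof while extracting the sharp constant $1$ via a more careful use of the concentration assumption.

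The first pillar, extracted from \cite{EMilmanGeometricApproachPartI}, is that under the present hypotheses the isoperimetric profile $\I := \I_{(K, d, \mu_K)}$ is concave on $[0, 1]$. Combined with $\I(0) = \I(1) = 0$, concavity forces $v \mapsto \I(v)/v$ to be non-increasing on $(0, 1/2]$ and $v \mapsto \I(v)/(1-v)$ to be non-decreasing on $[1/2, 1)$, so $D_{Che}(K, d, \mu_K) = 2 \I(1/2)$, and the task reduces to showing $\I(1/2) \geq (1 - 2\lambda_0)/(2 r_0)$. For this, pick a near-minimizer $A \subset K$ with $\mu_K(A) = 1/2$ and $\mu_K^+(A) \leq \I(1/2) + \varepsilon$. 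The new step is to apply the concentration hypothesis \emph{symmetrically}, once to $A$ and once to $A^c$ (both of measure $\geq 1/2$); setting $A_{-r_0} := K \setminus (A^c)^d_{r_0}$, this yields the two estimates
\[
\mu_K(A^d_{r_0}) - \mu_K(A) \geq \tfrac{1}{2} - \lambda_0 \quad \text{and} \quad \mu_K(A) - \mu_K(A_{-r_0}) \geq \tfrac{1}{2} - \lambda_0.
\]

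The second pillar is a sharp one-sided integrated isoperimetric bound, supplied by the geometric technique of \cite[Section 2]{EMilmanGeometricApproachPartI}: for a near-minimizer $A$ as above, at least one of
\[
\mu_K(A^d_r) - \mu_K(A) \leq r\, \mu_K^+(A) \quad \text{or} \quad \mu_K(A) - \mu_K(A_{-r}) \leq r\, \mu_K^+(A)
\]
holds for all $r > 0$. Pairing whichever of these holds with the matching concentration estimate above at $r = r_0$ gives $\tfrac{1}{2} - \lambda_0 \leq r_0\, \mu_K^+(A) \leq r_0 (\I(1/2) + \varepsilon)$, and letting $\varepsilon \to 0$ yields $\I(1/2) \geq (1 - 2\lambda_0)/(2 r_0)$, i.e.\ $D_{Che}(K) \geq (1 - 2\lambda_0)/r_0$.

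\paragraph{Main obstacle.} The hardest ingredient is the one-sided integrated bound. The more symmetric-looking variant $\mu_K(A^d_r \setminus A_{-r}) \leq 2 r\, \mu_K^+(A)$ actually fails in general, so one genuinely needs the one-sided statement: for a Cheeger near-minimizer, the Minkowski content $\mu_K^+(A^d_t)$ (resp.\ $\mu_K^+(A_{-t})$) must be non-increasing in $t$ in at least one of the two flow directions. This one-sided monotonicity is a consequence of the curvature-convexity hypotheses combined with the concavity of $\I$, and is extracted from the semigroup/comparison toolkit of \cite[Section 2]{EMilmanGeometricApproachPartI}; the refinement in Section \ref{sec:warm} sharpens the quantitative form of that bound to yield the sharp constant $1$ in place of the smaller constant that appeared in \cite[Theorem 7.1]{EMilmanGeometricApproachPartI}.
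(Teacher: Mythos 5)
Your overall architecture matches the paper's: reduce to $\I(1/2)$ via $D_{Che}=2\I(1/2)$ (concavity of the profile), establish a one-sided linear growth bound $\mu_K(A^d_{r})-\mu_K(A)\le r\,\mu_K^+(A)$ in at least one of the two directions (outward from $A$ or outward from $K\setminus A$), and pair it with the concentration hypothesis applied to whichever of $A$, $A^c$ is the favorable side. You also correctly flag that the symmetric two-sided version is unavailable and that the one-sided bound is the crux. The reduction logic is sound.

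The gap is that the one-sided bound is asserted rather than proved, and the mechanism you propose for it is not the right one. It does not follow from concavity of $\I$, nor is it "extracted from" the earlier toolkit of \cite{EMilmanGeometricApproachPartI} — if it were already there with constant $1$, the earlier theorem would not have had a worse constant; this bound is precisely the new content of the section. Moreover, your formulation for an $\varepsilon$-near-minimizer cannot work as stated: the proof requires the structure of an \emph{actual} isoperimetric minimizer $A$ of measure $1/2$ (existence and regularity from Geometric Measure Theory), whose regular boundary $\partial_r A$ has \emph{constant} mean curvature $H_A$ by the first variation; a near-minimizer has no such structure. The actual argument is: (i) swapping $A$ with $K\setminus A$ reverses the sign of $H_A$, so one may assume $H_A\le 0$ (this sign choice is exactly why only one of your two directions is guaranteed); (ii) the Heintze--Karcher comparison under $\mathrm{Ric}\ge 0$ gives $J_A(x,s)\le (1+H_A s)^{n-1}\le 1$ for the normal exponential Jacobian up to the focal distance, and surjectivity of the normal map onto $A^d_t\setminus A$ (using geodesic convexity of $K$ and orthogonality of $\overline{\partial_r A}$ to $\partial K$) then integrates to $\mu_K(A^d_t)-\mu_K(A)\le t\,\mu_K^+(A)$. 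Without identifying this curvature-comparison step, the proof is incomplete.
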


The improvement lies in the quantitative dependence on the parameter $\lambda_0$ above. This dependence is now asymptotically best possible for values of $\lambda_0$ close to $1/2$ and $0$, as easily witnessed in the Euclidean setting when $K$ denotes the cube $[0,1]^n$ for the former possibility, and an extremely elongated cube $[0,1]^{n-1} \times [0,M]$ (with $M \rightarrow \infty$) for the latter one; in either case, the isoperimetric minimizing set having $\mu_K$-measure $1/2$ is known to be the half cube $[0,1]^{n-1} \times [0,M/2]$ (\cite{HadwigerCube}, see also \cite{BollobasLeader,BartheMaureyIsoperimetricInqs}), implying that $D_{Che}(K) = 2/M$. We also mention the alternative semi-group approach to the results of \cite{EMilmanGeometricApproachPartI} developed by M. Ledoux in \cite{LedouxConcentrationToIsoperimetryUsingSemiGroups}, which we do not see how to adapt to obtain the sharp estimate of Theorem \ref{thm:small-conc}.

\medskip

To obtain the required preparations for the proof of Theorem \ref{thm:KLS-II} and to demonstrate the aforementioned improvement, we first present the proof of Theorem \ref{thm:small-conc}. We refer to \cite{EMilmanGeometricApproachPartI} for a detailed account of all the tools and corresponding references used in the proof.

\begin{proof}[Proof of Theorem \ref{thm:small-conc}]
It is known that under our assumptions:
\begin{equation} \label{eq:half}
 D_{Che}(K,d,\mu_K) = 2 \I_{(K,d,\mu_K)}(1/2) = 2 \inf \set{ \mu_K^+(A) \; ; \; \mu_K(A) = 1/2} ~.
\end{equation}
This may be deduced as in \cite{EMilman-RoleOfConvexity} from the concavity of the isoperimetric profile $\I$ in this setting; an easier argument was later obtained in \cite{EMilmanGeometricApproachPartI}.

Let $A$ denote an open set $A$ with $\mu_K(A)=1/2$ and having minimal boundary measure $\mu_K^+(A)$ among all such sets. Geometric Measure Theory ensures that such an isoperimetric minimizer indeed exists, that its relative boundary $\partial A \cap K$ cannot have ``too many'' singularities, and provides a description of the structure of these possible singularities. In particular, the regular open subset of $\partial A \cap K$, which we denote by $\partial_r A$, is an analytic $n-1$ dimensional submanifold having a well defined unit outer normal, which we denote by $\nu_A(x)$ at $x \in \partial_r A$, and the singular part of $\overline{\partial A \cap K}$ is negligible enough to ensure that $\mu_K^+(A) = \H^{n-1}(\partial_r A) / vol_M(K)$.
In addition, this allows taking variations of $\partial_r A$, and it is classical that the minimality of the boundary measure implies that $\partial_r A$ must have constant mean curvature, which we denote by $H_A$.
Although this is not essential for the sequel, we mention that we use the following \emph{non-standard} convention for specifying the mean-curvature's sign: it is \emph{positive} for the sphere in Euclidean space with respect to the \emph{outer} normal. In addition, it is known that for any $x$ in the intersection of $\overline{\partial_r A}$ and $\partial K$, these two sets meet orthogonally at $x$, which together with the geodesic convexity of $K$ ensure that any closest point $P_x \in \overline{\partial A \cap K}$ to $x \in K$ is in fact in $\partial A \cap K$. Moreover, as first observed by Gromov \cite{GromovGeneralizationOfLevy}, $P_x$ will actually lie in $\partial_r A$, and so $x$ lies on the geodesic emanating from $P_x$ in the direction $\pm n_A(x)$. Consequently, the classical Heintze--Karcher comparison theorem in Riemannian Geometry \cite{HeintzeKarcher} (see also \cite{BuragoZalgallerBook}) applies to $\partial_r A$ as described next.

Define the normal exponential map $\exp_A: \partial_r A \times [0,\infty) \rightarrow M$ given by $\exp_A(x,s) = \exp_x(s \nu_A(x))$. It follows from the above discussion that the restriction of $\exp_A$ on $N_t := \partial_r A \times [0,t)$ is surjective on $A^{K,d}_t \setminus A$. Let $J_A(x,s)$ denote the Jacobian of this map, so that the pull-back of $vol_M$ by $\exp_A$ is given by $\exp_A^*(vol_{M}) = J_A(x,s) dvol_{\partial_r A}(x) ds$. As is well known (e.g. \cite{HeintzeKarcher,ChavelRiemannianGeometry1stEd}), $\exp_A$ remains onto $A^{K,d}_t \setminus A$ even when restricted to the set $N_t \cap DFoc(A)$, where:
\[
DFoc(A) := \set{ (x,s) \; ; \; x \in \partial_r A \;\; s \in [0,foc_A(x)] } ~,
\]
and $foc_A(x)$ denotes the first positive zero of $s \mapsto J_A(x,s)$ (and $+\infty$ if this function does not vanish on $\Real_+$); we will return to this point in greater detail during the proof of Theorem \ref{thm:KLS-II}. Consequently:
\begin{equation} \label{eq:loose}
 vol_M(K) \mu_K(A^d_t \setminus A) \leq \int_{\partial_r A} \int_0^{\min(t,foc_A(x))} J_A(x,s) ds \; dvol_{\partial_r A}(x) ~.
\end{equation}
The Heintze--Karcher theorem (in particular) bounds $J_A(x,\cdot)$ by the Jacobian $\J_{0,H_A}$ of the normal exponential map in the model space of constant curvature (determined by the lower bound $0$ on the Ricci curvature), corresponding to a surface at an umbilical point having the same mean-curvature as $\partial_r A$ at $x$, i.e. $H_A$. In our case, the model space is simply the Euclidean one, and so (e.g. \cite[Corollary 33.3.7]{BuragoZalgallerBook}):
\[
J_A(x,s) \leq \J_{0,H_A}(s) := (1 + H_A s)^{n-1} \;\;\;\; \forall s \in [0,foc_A(x)] ~.
\]
We remark that when $(M,g)$ is Euclidean, this is just a consequence of the Arithmetic-Geometric Means inequality, valid as long as $1 + k_{\min} s \geq 0$, where $k_{\min}$ denotes the smallest principle curvature of $\partial_r A$ at $x$. In any case, since $J_A(x,0) = 1$, we see that all terms above are non-negative in the specified range of $s$, by the definition of $foc_A(x)$.

The new observation is that by exchanging $A$ with $B = K \setminus A$ if necessary, we may always assume that $H_A \leq 0$ (note that $B$ is still an isoperimetric minimizer of measure $1/2$, since the regularity of $\partial A \cap K = \partial B \cap K$ ensures that $\mu_K^+(A) = \mu_K^+(B)$).
Consequently:
\[
 vol_M(K) \mu_K(A^d_t \setminus A) \leq t \H^{n-1}(\partial_r A) = t  \; vol_M(K) \mu_K^+(A) ~.
\]
Using (\ref{eq:half}), it follows that:
\begin{equation} \label{eq:small-conclusion}
 \frac{1}{2} D_{Che}(K,d,\mu_K) \geq \frac{\mu_K(A^d_t) - 1/2}{t} ~.
\end{equation}
The claimed result follows by using $t = r_0$ and our assumption that $\mu_K(A^d_{r_0}) \geq 1 - \lambda_0$.
\end{proof}


\section{Proof of Theorem \ref{thm:KLS-II}} \label{sec:KLS}

For the proof of Theorem \ref{thm:KLS-II}, we will need to be slightly more careful with the above argument, and exploit the Heintze--Karcher theorem in its full strength.

\begin{proof}[Proof of Theorem \ref{thm:KLS-II}]
We continue with the notations used in the proof of Theorem \ref{thm:small-conc}, and assume that $A$ is an open set with $\mu_K(A) = 1/2$ which minimizes $\mu_K^+(A)$ among all such sets, so that in addition $H_A \leq 0$.

Recall that the cut-distance function $cut_A : \partial_r A \rightarrow \Real_+ \cup \set{+\infty}$ is defined as follows:
\[
cut_A(x) = \sup \set{s > 0 \; ; \; d(\exp_x(s \nu_A(x)),\partial_r A) = s} ~.
\]
It is easy to verify (e.g. \cite[pp. 105,134]{ChavelRiemannianGeometry1stEd}) that the geodesic $s \mapsto \exp_x(s \nu_A(x))$, $s \in [0,t]$, is the unique geodesic minimizing distance between $\exp_x(t \nu_A(x))$ and $\partial_r A$ (equivalently $A$), for all $t \in (0,cut_A(x))$, and that it fails to minimize distance for all $t > cut_A(x)$. Let $Cut(A)$ denote the cut locus of $A$ in $M$, defined as:
\[
 Cut(A) = \set{\exp_x(cut_A(x) \nu_A(x)) \; ; \; x \in \partial_r A \;,\; cut_A(x) < \infty} ~,
\]
and set:
\[
 DCut(A) = \set{(x,s) \; ; \; x \in \partial_r A \;,\; s \in [0,cut_A(x)) } ~.
\]
It follows that restricting on $DCut(A)$, the normal exponential map $\exp_A$ is injective. It is also well known \cite{ChavelRiemannianGeometry1stEd} that the cut-distance cannot exceed the distance to the first focal point, i.e. that $cut_A(x) \leq foc_A(x)$, and so $\exp_A$ restricted on $DCut(A)$ is in fact a diffeomorphism onto its image. Combining with the discussion in the proof of Theorem \ref{thm:small-conc}, it follows that each $p \in K \setminus (A \cup Cut(A))$ may be reached by a unique geodesic $s \mapsto \exp_x(s \nu_A(x))$, $s \in [0,t]$, with $(x,t) \in DCut(A)$; this geodesic minimizes the distance to $\partial_r A$ and remains in $K$.

It is also well known \cite{ChavelRiemannianGeometry1stEd} that $cut_A: \partial_r A \rightarrow (0,+\infty]$ is a continuous function. Therefore the graph of $cut_A$ in the normal bundle over the $(n-1)$ dimensional open manifold $\partial_r A$ has zero $n$-dimensional Hausdorff measure, and consequently, so does its image under the (smooth) normal exponential map $\exp_A$, i.e. $vol_M(Cut(A)) = 0$ (see e.g. \cite{ChavelRiemannianGeometry1stEd}, and also \cite{JapaneseDistanceToCutLocus} for more precise information).
This implies that for any $f \in L_1(K \setminus A)$:
\begin{equation} \label{eq:integration}
 \int_{K \setminus A} f(x) dvol_M(x) = \int_{\partial_r A} \int_0^{cut^K_A(x)} J_A(x,s) f(\exp_x(s \nu_A(x))) \; ds  \; dvol_{\partial_r A}(x) ~,
\end{equation}
where $cut^K_A(x) = \min(cut_A(x),e_K(x))$, and $e_K(x) \in (0,+\infty]$ denotes the first time the geodesic $s \mapsto \exp_x(s \nu_A(x))$ exits $K$. Note that since $cut_A(x) \leq foc_A(x)$, $J_A(x,s)$ in the above integral is always positive (except possibly at the end point).

We would now like to obtain an upper bound for $E := \int d(x,A) d\mu_K(x)$. Using the above description, we obtain:
\[
 E = \frac{1}{vol_M(K)} \int_{\partial_r A} \int_0^{cut^K_A(x)} J_A(x,s) s \; ds \; dvol_{\partial_r A}(x) ~.
\]
We now employ the Heintze--Karcher theorem in its stronger version \cite{HeintzeKarcher} (see also \cite[Theorem 33.3.9]{BuragoZalgallerBook}), which states that:
\[
\frac{d}{ds} \log J_A(x,s) \leq \frac{d}{ds} \log \J_{0,H_A}(s) \;\;\; \forall s \in [0,foc_A(x)) ~.
\]
In particular, since $H_A \leq 0$, we observe that $J_A(x,\cdot)$ is non-increasing on $[0,cut^K_A(x)]$.
It is elementary to verify that for any non-increasing function $\varphi$ on $[0,e]$:
\[
 \int_0^e \varphi(s) s \; ds \leq 4 \int_0^{e/2} \varphi(s) s \; ds \leq 4 \int_0^{e} \varphi(s) \min(s,e-s) \; ds ~,
\]
and so we obtain:
\[
 E \leq \frac{4}{vol_M(K)} \int_{\partial_r A} \int_0^{cut^K_A(x)} J_A(x,s) \min(s,cut^K_A(x)-s) \; ds  \; dvol_{\partial_r A}(x) ~.
\]
Recall that the (half open) geodesic $\gamma_x : [0,cut^K_A(x)) \rightarrow M$ given by $s \mapsto \exp(s \nu_A(x))$, lies entirely inside $K$ and minimizes the distance between $\gamma_x(t)$ and $A$ for any $t \in [0,cut^K_A(x))$, and in particular, between $\gamma_x(t)$ and $x$. Consequently:
\[
2 \min(s,cut^K_A(x)-s) \leq \theta_K(\exp_x(s \nu_A(x))) \;\;\; \forall s \in [0,cut^K_A(x)) ~.
\]
Using (\ref{eq:integration}) again (a mere inequality as in (\ref{eq:loose}) would not suffice), we therefore obtain:
\begin{equation} \label{eq:E-estimate}
 E \leq \frac{2}{vol_M(K)} \int_{K \setminus A} \theta_K(x) dvol_M(x) \leq 2 \int \theta_K(x) d\mu_K(x) ~.
\end{equation}

We can now conclude as in the proof of Theorem \ref{thm:small-conc}. Choosing $r_0 = 4 E$, we obtain by the Markov--Chebyshev inequality that:
\[
1 - \mu_K(A^d_{r_0}) = \mu_K(\set{d(x,A) \geq r_0}) \leq \frac{E}{r_0} = \frac{1}{4} ~,
\]
and using (\ref{eq:small-conclusion}), we conclude that $D_{Che}(K,d,\mu_k) \geq 1/(2 r_0) = 1/(8 E)$. Recalling the estimate (\ref{eq:E-estimate}), the conclusion of the theorem follows with $c = 1/16$.
\end{proof}

\begin{rem} \label{rem:locVsJac}
In the Euclidean setting, the localization method mentioned in the Introduction reduces the study of certain ensembles of inequalities in $\Real^n$, to the study of the same inequalities on a ``needle'', i.e. a one-dimensional interval endowed with a measure of the form $l(s)^{n-1} ds$, where $l$ is a concave function (and in fact, a further reduction shows that $l$ may be assumed affine). The analogy with our method becomes apparent, after observing (e.g. \cite[Lemma 4.20]{GHLBook}) that $s \mapsto J_A(x,s)^{1/(n-1)}$ is always a concave function when the Ricci curvature is non-negative along the geodesic $s \mapsto \exp_x(s \nu_A(x))$. So our method may be thought of as integration over all of these ``needles'' emanating from $\partial_r A$, and provides a concrete geometric interpretation of the localization method, which is also valid in the Riemannian setting.
\end{rem}


\section{Estimating $\theta_K$ in $CAT(\kappa)$ spaces} \label{sec:CAT}

To use the estimate given by Theorem \ref{thm:KLS-II}, we will use the following theorem, which follows from the work of Cartan, Alexandrov and Toponogov, together with some other results in Riemannian Geometry.

\begin{thm} \label{thm:CAT}
Let $\Omega$ denote a geodesically convex domain in a smooth oriented Riemannian manifold $(M,g)$ (having induced metric $d$), and let $\kappa \in \Real$ denote an upper bound for the sectional curvatures of $(M,g)$ in $\Omega$. Assume in addition that one of the following conditions holds:
\renewcommand{\labelenumi}{(\Alph{enumi})}
\begin{enumerate}
\item $\Omega$ is strongly convex in $(M,g)$.
\item The injectivity radius of $\Omega$ is at least $\pi / \sqrt{\max(\kappa,0)}$.
\item The shortest geodesic loop in $(M,g)$ has length at least $2 \pi / \sqrt{\max(\kappa,0)}$.
\item $(M,g)$ is compact, even-dimensional, with strictly positive sectional curvatures bounded above by $\kappa>0$.
\item $(M,g)$ is simply-connected with sectional curvatures bounded above by $\kappa \leq 0$.
\end{enumerate}
\renewcommand{\labelenumi}{(\arabic{enumi})}
Then $(\Omega,d)$ is a $CAT(\kappa)$ space.
\end{thm}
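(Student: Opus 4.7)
The plan is to execute a standard two-step scheme. First, I would apply the local form of the Cartan--Alexandrov--Toponogov theorem (Rauch's comparison together with the existence of totally normal convex neighborhoods): under the single hypothesis that the sectional curvatures of $(M,g)$ are bounded above by $\kappa$ on $\Omega$, every point of $\Omega$ admits a neighborhood that is $CAT(\kappa)$. Second, I would globalize via Alexandrov's patchwork (or gluing) theorem: a locally $CAT(\kappa)$ geodesic space in which every two points at distance less than $D_\kappa := \pi/\sqrt{\max(\kappa,0)}$ are joined by a unique minimizing geodesic of length less than $D_\kappa$ is in fact globally $CAT(\kappa)$ (under the paper's convention $1/0 = +\infty$, this hypothesis just says ``any two points'' when $\kappa\leq 0$). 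Thus in each case (A)--(E) my task reduces to verifying this uniqueness-of-geodesics hypothesis on $(\Omega,d)$.

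I would then organize (A)--(E) as a cascade. Case (A) is tautological, since strong convexity of $\Omega$ is precisely the required uniqueness. For (B), the assumption $\text{inj}(\Omega)\geq D_\kappa$ ensures that any two points of $\Omega$ within distance $D_\kappa$ are joined by a unique minimizing geodesic of $M$, and geodesic convexity of $\Omega$ confines that geodesic to $\Omega$; so (B) reduces to (A). Case (C) reduces to (B) through Klingenberg's lemma: at any point where $\text{inj}(M)<D_\kappa$, the injectivity radius is realized either at a conjugate point---which Rauch's theorem rules out under $\text{sec}\leq\kappa$ for distances less than $D_\kappa$---or as the midpoint of a geodesic loop of length less than $2D_\kappa$, contradicting hypothesis (C). Case (D) reduces to (B) by Klingenberg's classical injectivity-radius theorem for even-dimensional pinched manifolds: on a compact, oriented, even-dimensional manifold with $0<\text{sec}\leq\kappa$, one has $\text{inj}(M)\geq D_\kappa$, the proof combining Klingenberg's lemma above with a Synge second-variation argument that rules out a short closed geodesic via a parallel unit normal field, whose existence is guaranteed by orientability and even-dimensionality. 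Finally, (E) is exactly the Cartan--Hadamard theorem: a complete simply connected Riemannian manifold with $\text{sec}\leq\kappa\leq 0$ is globally $CAT(\kappa)$, and this property descends to any geodesically convex subspace.

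The main obstacle I anticipate is not any individual implication but the care required to set up Alexandrov's patchwork cleanly in the $\kappa>0$ regime: one must arrange the local $CAT(\kappa)$ neighborhoods to have size compatible with $D_\kappa$, and when subdividing admissible triangles (whose perimeters must be controlled by $2D_\kappa$) one must stay within the range where both local comparison and the unique-minimizing-geodesic hypothesis can simultaneously be invoked. Once that scaffolding is in place, the derivations (A)--(E) above become essentially bookkeeping atop standard results from Riemannian geometry.
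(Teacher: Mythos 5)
Your proposal is correct, but it globalizes differently from the paper. The paper never passes through a local-to-global statement: it verifies Alexandrov's angle condition (2') directly for each admissible geodesic triangle $\Delta(p,q,r)$, by lifting the opposite side $[q,r]$ through $\exp_p^{-1}$ into $T_pM$ and invoking Rauch's comparison theorem (following Klingenberg, Theorem 2.7.6); the sole role of hypotheses (A)--(E) there is to guarantee that a neighborhood of $[q,r]$ avoids the cut locus of $p$, so that the lift exists. You instead invoke the local Cartan--Alexandrov theorem (sectional curvature $\leq \kappa$ implies locally $CAT(\kappa)$) and then Alexandrov's patchwork, for which (A)--(E) serve to provide unique minimizing geodesics between points at distance less than $D_\kappa$ --- the same cut-locus control, repackaged. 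Your cascade --- (A) tautological, (B) via the injectivity radius plus geodesic convexity, (C) via Klingenberg's lemma together with Morse--Schoenberg, (D) via Synge and Klingenberg's injectivity-radius theorem, (E) via Cartan--Hadamard --- is exactly the paper's. What your route buys is that the comparison step is outsourced to two standard black boxes from the $CAT(\kappa)$ literature rather than redone by hand with Rauch; the price is the additional hypotheses of the patchwork theorem (continuous dependence of the unique geodesics on their endpoints, plus the $D_\kappa$-bookkeeping in the subdivision that you rightly flag), which here follow from completeness of $M$, geodesic convexity of $\Omega$, and the uniqueness you have already established. Either way, the only non-routine content is the cut-locus verification in each of the five cases, and you handle it exactly as the paper does.
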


Let us recall the definition of a $CAT(\kappa)$ space \cite{CAT0SpacesBook}. Denote by $M_\kappa$ ($\kappa \in \Real$) the simply connected two-dimensional model space of constant sectional curvature $\kappa$, let $d_\kappa$ denote the induced metric on $M_\kappa$, and let $D_\kappa$ denote its diameter, i.e. $D_k = \pi/\sqrt{\kappa}$ if $\kappa>0$ and $+\infty$ otherwise. Let $(\Omega,d)$ denote a general metric space. A continuous mapping $\gamma: [a,b] \rightarrow (\Omega,d)$ is called a geodesic if its length $L(\gamma)$, defined as:
\[
 L(\gamma) := \sup \set{ \sum_{i=0}^{n-1} d(\gamma(t_i),\gamma(t_{i+1})) \; ; \; a = t_0 < t_1 < \ldots < t_{n-1} < t_n = b } ~,
\]
is equal to $d(\gamma(a),\gamma(b))$. This coincides with the definition of a minimizing geodesic on a Riemannian manifold, so we will henceforth refer to $\gamma$ as a minimizing geodesic, or (minimizing geodesic) segment $[\gamma(a),\gamma(b)]$. Let $\Delta = \Delta(p,q,r)$ be a geodesic triangle, meaning that its sides are minimizing geodesic segments $[p,q]$,$[q,r]$,$[r,p]$ connecting their corresponding endpoints, and let $L(\Delta) = d(p,q) + d(q,r) + d(p,r)$ denote its perimeter. It is known that if $L(\Delta) < 2 D_\kappa$, then there exists a comparison geodesic triangle $\Delta_0 = \Delta(p_0,q_0,r_0)$ in $M_\kappa$ having the same side lengths (which is also unique up to isometry). Let us write $y \in \Delta$ to denote that $y$ lies on one of the sides of $\Delta$. A point $x_0 \in \Delta_0$ is called a comparison point for $x \in \Delta$ if its distance from the endpoints of the side it lies on (say $p_0,q_0$) is exactly equal to that of $x$ (i.e. $d_\kappa(x_0,p_0) = d(x,p)$, and hence $d_\kappa(x_0,q_0) = d(x,q)$). The space $(\Omega,d)$ is called a $CAT(\kappa)$ space if the following two conditions hold:
\begin{enumerate}
\item Any two points $x,y \in \Omega$ with $d(x,y) < D_{\kappa}$ may be joined by a minimizing geodesic.
\item For any geodesic triangle $\Delta = \Delta(p,q,r)$ in $(\Omega,d)$ with perimeter $L(\Delta) < 2 D_{\kappa}$, and any two points $x,y \in \Delta$, the corresponding comparison points $x_0,y_0$ on the comparison triangle $\Delta_0$ in $M_\kappa$ satisfy $d(x,y) \leq d_\kappa(x_0,y_0)$.
\end{enumerate}
It was shown by Alexandrov \cite{AlexandrovCATPaper} that for a metric space satisfying condition (1), condition (2) is equivalent to the following condition:
\renewcommand{\labelenumi}{(\arabic{enumi}')}
\begin{enumerate}
\setcounter{enumi}{1}
\item For any geodesic triangle $\Delta = \Delta(p,q,r)$ in $(\Omega,d)$ with perimeter $L(\Delta) < 2 D_{\kappa}$,
with $p \neq q$ and $p \neq r$, if $\gamma$ denotes the Alexandrov angle between $[p,q]$ and $[p,r]$ at $p$, and if $\Delta_0 = \Delta(p_0,q_0,r_0)$ is a geodesic triangle in $M_\kappa$ with $d_{\kappa}(p_0,q_0) = d(p,q)$, $d_{\kappa}(p_0,r_0) = d(p,r)$ and angle at $p_0$ equal to $\gamma$, then $d(q,r) \geq d_{\kappa}(q_0,r_0)$.
\end{enumerate}
\renewcommand{\labelenumi}{(\arabic{enumi})}
For a definition of the Alexandrov angle we refer to \cite{CAT0SpacesBook}, and only remark that on Riemannian manifolds, the Alexandrov angle coincides with the usual Riemannian angle between geodesics emanating from a common point.

\medskip

Since we could not find a reference for our desired formulation of Theorem \ref{thm:CAT}, we provide a proof for completeness:

\begin{proof}[Proof of Theorem \ref{thm:CAT}]
Condition (1) is satisfied since $\Omega$ is geodesically convex. Condition (2') may be deduced as a consequence of Rauch's Comparison Theorem; let us briefly sketch how this may be done, paying in particular attention to the reason behind our additional assumptions on $\Omega$ and $M$.

Given a geodesic triangle $\Delta = \Delta(p,q,r)$ in $\Omega$ with $L(\Delta)<2 D_\kappa$, it easily follows from the triangle inequality that $\Delta \subset B(p, D_\kappa)$.
We would like to lift via $\exp_p^{-1}$ the minimizing geodesic $[q,r]$ to a smooth curve in $T_p M$; to this end, we need to assume that a neighborhood of $[q,r]$ is disjoint from the cut-locus $Cut_p$ of $p$, so that $\exp_p$ is a diffeomorphism onto its image when restricting to the corresponding domain in $T_p M$. Our additional assumptions take care of this point:
\renewcommand{\labelenumi}{(\Alph{enumi})}
\begin{enumerate}
 \item
If $\Omega$ is strongly convex, then for any $p \in \Omega$, $Cut_p$ is disjoint from $\Omega$ by the uniqueness of the minimizing geodesic (e.g. \cite[Theorem 2.1.14]{KlingenbergRiemannianGeometry1stEd}).
\item
If the injectivity radius of $p$ in $\Omega$ is at least $D_\kappa$, which is just another way of saying that $d(p,Cut_p \cap \Omega) \geq D_\kappa$, then we are also in the clear since $\Delta \subset B(p,D_\kappa)$.
\item
We note that our curvature assumption guarantees by the Morse--Schoenberg Theorem (e.g.
\cite[Theorem 2.14]{ChavelRiemannianGeometry1stEd}) that there are no conjugate points to $p$ on a minimizing geodesic in $\Omega$ connecting $p$ to any $y \in B(p,D_{\kappa}) \cap \Omega$, so we just need to make sure that there are no cut-points of $p$ which are non-conjugate points in $B(p,D_{\kappa}) \cap \Omega$. Our third assumption ensures by Klingenberg's Lemma (e.g. \cite[Theorem 3.4]{ChavelRiemannianGeometry1stEd})
that there are no such points in $B(p,D_{\kappa})$ for any $p \in M$.
\item
By the theorems of Synge \cite[Theorem 2.6.7 (ii)]{KlingenbergRiemannianGeometry1stEd} and Klingenberg \cite[Theorem 2.6.9]{KlingenbergRiemannianGeometry1stEd}, the fourth assumption implies the third one.
\item
Our last assumption is classical: it implies by the Hadamard--Cartan theorem \cite[2.6.6]{KlingenbergRiemannianGeometry1stEd} that the injectivity radius is infinite, i.e. that $\exp_p : T_p M \rightarrow M$ is a diffeomorphism for any $p\in M$.
\end{enumerate}
\renewcommand{\labelenumi}{(\arabic{enumi})}
We conclude that under our assumptions, a smooth lift of $[q,r]$ into $T_p M$ is feasible. By comparing to the model space $M_\kappa$, the rest of the proof follows from Rauch's Theorem as in \cite[Theorem 2.7.6]{KlingenbergRiemannianGeometry1stEd}.
\end{proof}

In a $CAT(\kappa)$ space, it is easy to control the length of the longest symmetric minimizing geodesic inside an annulus, as shown in the following:
\begin{prop} \label{prop:CAT}
Let $p \in (M,g)$, and let $\Omega$ denote a geodesically convex open subset of $B(p, R_2)$. Assume that $(\Omega,d)$ is a $CAT(\kappa)$ space for some $\kappa \geq 0$,
and if $\kappa > 0$ assume that $R_2 \leq \pi/(2 \sqrt{\kappa})$.
Then for any $x \in \Omega$ with $d(x,p) \geq R_1 \geq 0$, we have the following estimate:
\[
\theta_\Omega(x) \leq \begin{cases} \frac{2}{\sqrt{\kappa}} \cos^{-1}\brac{ \frac{\cos(R_2 \sqrt{\kappa})}{\cos(R_1 \sqrt{\kappa})}} & \kappa > 0 \\ 2 \sqrt{R_2^2 - R_1^2} & \kappa = 0 \end{cases} ~.
\]
\end{prop}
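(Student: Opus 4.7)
The plan is to bound $2t$ for any admissible configuration in the definition of $\theta_\Omega(x)$ by translating it, via the $CAT(\kappa)$ inequality, into a problem on the two-dimensional model space $M_\kappa$. Fix $\xi \in S_x$ and $t \geq 0$ such that $\gamma(s) := \exp_x(s\xi) \in \Omega$ for all $s \in [-t,t]$ and $d(\gamma(-t),\gamma(t)) = 2t$, and set $y := \gamma(-t)$, $z := \gamma(t)$. Since $\Omega \subset B(p,R_2)$, the geodesic triangle $\Delta(p,y,z)$ has perimeter strictly less than $4R_2 \leq 2\pi/\sqrt{\kappa} = 2D_\kappa$; the $CAT(\kappa)$ hypothesis then produces a comparison triangle $\Delta(p_0,y_0,z_0)$ in $M_\kappa$ with the same side-lengths, and the comparison point of $x$, namely the midpoint $x_0$ of $[y_0,z_0]$, satisfies $m := d_\kappa(p_0,x_0) \geq d(p,x) \geq R_1$.

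The core step is a model-space median identity relating $a := d_\kappa(p_0,y_0)$, $b := d_\kappa(p_0,z_0)$, $m$, and $t$. I would write the law of cosines in $M_\kappa$ for each of the two sub-triangles $(p_0,x_0,y_0)$ and $(p_0,x_0,z_0)$: their angles at $x_0$ are supplementary (since $y_0,x_0,z_0$ are collinear on a geodesic), so summing the two identities cancels the cross terms and yields
\[
\cos(a\sqrt{\kappa}) + \cos(b\sqrt{\kappa}) = 2\cos(m\sqrt{\kappa})\cos(t\sqrt{\kappa})
\]
for $\kappa>0$, and the Euclidean Apollonius identity $a^2 + b^2 = 2(m^2+t^2)$ for $\kappa = 0$.

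Given the identity, the asserted bound falls out by monotonicity. Since $a,b < R_2 \leq \pi/(2\sqrt{\kappa})$ and $t < R_2$, the quantities $a\sqrt{\kappa}, b\sqrt{\kappa}, t\sqrt{\kappa}$ lie in $[0,\pi/2)$, where cosine is strictly positive and strictly decreasing. Substituting $a,b \leq R_2$ into the numerator and $m \geq R_1$ into the denominator of the identity produces $\cos(t\sqrt{\kappa}) \geq \cos(R_2\sqrt{\kappa})/\cos(R_1\sqrt{\kappa})$; inverting yields the asserted bound on $2t$, and the Euclidean case is analogous with $t^2 \leq R_2^2 - R_1^2$. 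Taking the supremum over $\xi$ and $t$ then bounds $\theta_\Omega(x)$.

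The one place needing care, and the main obstacle, is verifying that $\cos(m\sqrt{\kappa}) > 0$ (equivalently $m < \pi/(2\sqrt{\kappa})$), which is what allows the identity to be solved for $\cos(t\sqrt{\kappa})$ and the comparison $m \geq R_1 \Rightarrow \cos(m\sqrt{\kappa}) \leq \cos(R_1\sqrt{\kappa})$ to be valid. The triangle inequality only gives $m \leq a+t \leq 2R_2 \leq \pi/\sqrt{\kappa}$, which is too weak. However, positivity is forced by the identity itself: if $\cos(m\sqrt{\kappa}) \leq 0$, the right-hand side would be non-positive (as $\cos(t\sqrt{\kappa})>0$), contradicting that the left-hand side is a sum of two strictly positive cosines.
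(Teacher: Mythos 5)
Your proposal is correct and follows essentially the same route as the paper: form the geodesic triangle $\Delta(p,\gamma(-t),\gamma(t))$, apply the $CAT(\kappa)$ comparison at the midpoint to get $d_\kappa(p_0,x_0)\geq d(p,x)\geq R_1$, and finish with the spherical law of cosines at $x_0$ (Apollonius in the flat case). The only, harmless, variation is in the last step: you sum the two supplementary-angle identities to obtain the median identity, whereas the paper selects the sub-triangle whose angle at $x_0$ is at least $\pi/2$ and simply drops the non-positive cross term; your version has the minor virtue of forcing the positivity of $\cos(m\sqrt{\kappa})$ explicitly.
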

We remark that, as follows from the proof, this estimate is sharp when $\Omega = B(p,R_2)$ in $M_\kappa$ (for $R_2$ in the above range), and in particular, the bound for $\kappa=0$ is obvious in the Euclidean case. An analogous bound may be obtained for the case $\kappa < 0$.

\begin{proof}
Given $x \in \Omega$ with $d(x,p) \geq R_1$, let $[q,r]$ denote a minimizing geodesic connecting $q,r \in \Omega$ and centered at $x$, and consider the geodesic triangle $\Delta = \Delta(p,q,r)$. Since $d(p,q) < R_2$, $d(p,r) < R_2$ and $d(q,r) < 2R_2$, it follows that $L(\Delta) < 4R_2 \leq 2 D_\kappa$. Let $\Delta_0 = \Delta(p_0,q_0,r_0)$ denote the comparison triangle to $\Delta$ in $M_{\kappa}$, and let $x_0$ denote the comparison point to $x$, i.e. the mid point of $[q_0,r_0]$. The $CAT(\kappa)$ property yields the second inequality below:
\begin{equation} \label{eq:triangle1}
 R_1 \leq d(p,x) \leq d(p_0,x_0) ~.
\end{equation}
Consider the angle $\gamma_{p_0,q_0}$ at $x_0$ between the minimizing geodesic segments $[x_0,p_0]$ and $[x_0,q_0]$. Replacing $q_0$ by $r_0$ if necessary, we may assume that $\gamma_{p_0,q_0} \geq \pi/2$. Denote by $L_{p_0,x_0},L_{x_0,q_0},L_{q_0,p_0}$ the side lengths of the geodesic triangle $\Delta'_0 = \Delta(p_0,x_0,q_0)$, and note that they are all smaller than $R_2$. We proceed assuming $\kappa > 0$, the case $\kappa = 0$ follows similarly (and more easily). By the spherical law of cosines \cite{CAT0SpacesBook}, we know that:
\[
 \cos(L_{p_0,q_0} \sqrt{\kappa}) = \cos(L_{p_0,x_0} \sqrt{\kappa}) \cos(L_{q_0,x_0} \sqrt{\kappa}) +
\sin(L_{p_0,x_0} \sqrt{\kappa}) \sin(L_{q_0,x_0} \sqrt{\kappa}) \cos(\gamma_{p_0,q_0}) ~.
\]
Since $R_2 \sqrt{\kappa} \leq \pi/2$ and $\gamma_{p_0,q_0} \geq \pi/2$, the rightmost term is non-positive, and together with (\ref{eq:triangle1}), it follows that:
\[
 \cos(R_2 \sqrt{\kappa}) \leq \cos(R_1 \sqrt{\kappa}) \cos(L_{q_0,x_0} \sqrt{\kappa}) ~.
\]
Solving for $L_{q_0,x_0}$, and since $d(q,r) = 2 L_{q_0,x_0}$, the asserted bound on $\theta_\Omega(x)$ follows.
\end{proof}


\section{Proof of Theorem \ref{thm:main2}} \label{sec:main2}

We are now ready to provide a proof of Theorem \ref{thm:main2}, which we state here in greater generality. The idea is the same one we used in \cite{EMilman-RoleOfConvexity} to re-derive Bobkov's Theorem in Euclidean space.

\begin{thm} \label{thm:main2'}
Let $(M,g)$ denote a complete oriented smooth Riemannian manifold with non-negative Ricci curvature, and let $d$ denote the induced geodesic distance. Let $K$ denote a geodesically convex bounded domain of $(M,g)$ with (possibly empty) $C^2$ boundary, and let $X$ denote a random point uniformly distributed in $K$ (with respect to the Riemannian volume form $vol_M$). Given $x \in M$, denote $E_{x} = \E(d(X,x))$, $S_{x} = \S(d(X,x))$, $R_x = E_x + 2 S_x$, and set $K_{x} := K \cap B(x,R_x)$.
Let $x_0 \in M$ and $\kappa_{x_0} \geq 0$ satisfy:
\begin{enumerate}
\item
$\kappa_{x_0}$ is an upper bound on the sectional curvatures of $(M,g)$ in $K_{x_0}$ ;
\item
$R_{x_0} < \pi/(2\sqrt{\kappa_{x_0}})$ ;
\item
There exists $\eps_0>0$ so that for any $\eps \in (0,\eps_0)$, there exists a domain $K_{x_0,\eps}$ so that:
\begin{enumerate}
\item $K_{x_0} \subset K_{x_0,\eps} \subset (K_{x_0})^d_\eps$ ;
\item $\partial K_{x_0,\eps}$ is $C^{2}$ smooth ;
\item $K_{x_0,\eps}$ is geodesically convex ;
\item
One of the following assumptions holds:
\begin{enumerate}
\item $K_{x_0,\eps}$ is in fact strongly geodesically convex ; or,
\item The injectivity radius of $K_{x_0,\eps}$ is at least $\pi / \sqrt{\kappa_{x_0}}$ ; or,
\item The shortest geodesic loop in $(M,g)$ has length at least $2 \pi / \sqrt{\kappa_{x_0}}$ ; or,
\item $(M,g)$ is compact, even-dimensional, with strictly positive sectional curvatures bounded above by $\kappa_{x_0}$.
\end{enumerate}
\end{enumerate}
\end{enumerate}
Then:
\[
D_{Che}(K) \geq c \frac{\sqrt{1 - \frac{2}{\pi} E_{x_0} \sqrt{\kappa_{x_0}}}}{\sqrt{E_{x_0} S_{x_0}}} ~,
\]
for some universal constant $c>0$.
\end{thm}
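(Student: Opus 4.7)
The strategy is to apply the generalized KLS bound (Theorem~\ref{thm:KLS-II}) not to $K$ directly but to the localized, smoothed domain $K_{x_0,\eps}$, then transfer the resulting estimate back to $K$. Two observations set the stage. First, Chebyshev's inequality applied to $d(X,x_0)$ under $\mu_K$ gives $\mu_K(\{d(X,x_0) \geq R_{x_0}\}) \leq S_{x_0}^2/(2S_{x_0})^2 = 1/4$, so $\mu_K(K_{x_0}) \geq 3/4$. Second, by assumption (3)(d) combined with Theorem~\ref{thm:CAT}, $(K_{x_0,\eps},d)$ is a $CAT(\kappa_{x_0})$ space for sufficiently small $\eps>0$, and since $K_{x_0,\eps} \subset B(x_0, R_{x_0}+\eps)$ with $R_{x_0}+\eps < \pi/(2\sqrt{\kappa_{x_0}})$, Proposition~\ref{prop:CAT} applied with $p=x_0$, $R_2 = R_{x_0}+\eps$ and $R_1 = d(x,x_0)$ gives the pointwise bound
\[
\theta_{K_{x_0,\eps}}(x) \;\leq\; \tfrac{2}{\sqrt{\kappa_{x_0}}}\cos^{-1}\!\left(\tfrac{\cos((R_{x_0}+\eps)\sqrt{\kappa_{x_0}})}{\cos(d(x,x_0)\sqrt{\kappa_{x_0}})}\right)
\]
(and the analogous Euclidean estimate when $\kappa_{x_0}=0$).

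Applying Theorem~\ref{thm:KLS-II} to $K_{x_0,\eps}$ yields $D_{Che}(K_{x_0,\eps}) \geq c/\E_{\mu_{K_{x_0,\eps}}} \theta_{K_{x_0,\eps}}$. Writing $r = d(x,x_0)$ and $R = R_{x_0}+\eps$, the elementary estimate $\cos^{-1}(y) \leq \pi\sqrt{(1-y)/2}$ on $y \in [0,1]$ gives the pointwise inequality $\theta_{K_{x_0,\eps}}(x)^2 \leq (2\pi^2/\kappa_{x_0})(1 - \cos(R\sqrt{\kappa_{x_0}})\sec(r\sqrt{\kappa_{x_0}}))$. Taking expectation and using Jensen's inequality for the convex function $r \mapsto \sec(r\sqrt{\kappa_{x_0}})$ (so that $\E \sec(r\sqrt{\kappa_{x_0}}) \geq \sec(\tilde E_\eps \sqrt{\kappa_{x_0}})$, where $\tilde E_\eps = \E_{\mu_{K_{x_0,\eps}}} r$), together with the sine identity $\cos a - \cos b = 2\sin(\tfrac{a+b}{2})\sin(\tfrac{b-a}{2})$ and $\sin u \leq u$, leads to
\[
\E_{\mu_{K_{x_0,\eps}}}\!\theta_{K_{x_0,\eps}}^2 \;\leq\; \tfrac{\pi^2(R^2 - \tilde E_\eps^2)}{\cos(\tilde E_\eps\sqrt{\kappa_{x_0}})}.
\]
Letting $\eps \to 0$, a standard approximation argument as in the companion work yields the analogous bound for $K_{x_0}$.

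To relate $\tilde E := \lim_\eps \tilde E_\eps = \E_{\mu_{K_{x_0}}} d(X,x_0)$ to $E_{x_0}$, we use the law-of-total-variance identity. Writing $p = \mu_K(K_{x_0})$ and $\bar E = \E_{\mu_K}(d(X,x_0) \mid K \setminus K_{x_0})$, we have $E_{x_0} = p\tilde E + (1-p)\bar E$, and
\[
S_{x_0}^2 \;=\; \mathrm{Var}_{\mu_K}(d(X,x_0)) \;\geq\; p(1-p)(\tilde E - \bar E)^2,
\]
so $|\tilde E - E_{x_0}| = (1-p)|\tilde E - \bar E| \leq S_{x_0}\sqrt{(1-p)/p} \leq S_{x_0}/\sqrt 3$. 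Consequently, assuming $S_{x_0} \leq E_{x_0}$, one has $R_{x_0} - \tilde E \leq 2S_{x_0} + S_{x_0}/\sqrt 3$ and $R_{x_0} + \tilde E \leq 2E_{x_0} + 3S_{x_0}$, giving $R_{x_0}^2 - \tilde E^2 \leq C E_{x_0}S_{x_0}$; moreover, using the concavity of $\cos$ on $[0,\pi/2]$ together with the hypothesis $R_{x_0}\sqrt{\kappa_{x_0}} < \pi/2$, a short computation gives $\cos(\tilde E\sqrt{\kappa_{x_0}}) \geq \tfrac{1}{2}(1 - (2/\pi)E_{x_0}\sqrt{\kappa_{x_0}})$. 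Combining yields $\E\theta^2 \leq C' E_{x_0} S_{x_0}/(1 - (2/\pi)E_{x_0}\sqrt{\kappa_{x_0}})$, hence $D_{Che}(K_{x_0}) \geq c\sqrt{1 - (2/\pi)E_{x_0}\sqrt{\kappa_{x_0}}}/\sqrt{E_{x_0} S_{x_0}}$ in this regime.

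Finally, we transfer the bound from $K_{x_0}$ to $K$. For any isoperimetric minimizer $A \subset K$ with $\mu_K(A) = 1/2$, the concentration $\mu_K(K_{x_0}) \geq 3/4$ forces $\mu_{K_{x_0}}(A \cap K_{x_0}) \in [1/3, 2/3]$; combined with $\mathcal{H}^{n-1}(\partial A \cap K) \geq \mathcal{H}^{n-1}(\partial A \cap K_{x_0})$ (from $K_{x_0} \subset K$), this yields $\mu_K^+(A) \geq \mu_{K_{x_0}}^+(A \cap K_{x_0})\,\mu_K(K_{x_0}) \geq c\,D_{Che}(K_{x_0})$; together with (\ref{eq:half}) this proves the theorem when $S_{x_0} \leq E_{x_0}$. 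In the opposite regime $S_{x_0} > E_{x_0}$, the claimed bound follows a fortiori from the unconditional Riemannian KLS-type bound $D_{Che}(K) \geq c/E_{x_0}$ of the companion work, since $c/E_{x_0} \geq c/\sqrt{E_{x_0} S_{x_0}}$ there. The main obstacle is the variance-based moment estimate $|\tilde E - E_{x_0}| \leq S_{x_0}/\sqrt 3$, which is essential for recovering the sharp curvature factor $\sqrt{1 - (2/\pi)E_{x_0}\sqrt{\kappa_{x_0}}}$ rather than the weaker $\sqrt{\cos(R_{x_0}\sqrt{\kappa_{x_0}})}$ that a naive argument yields.
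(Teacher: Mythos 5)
Your proposal is correct, and while the overall architecture matches the paper's (localize to $K_{x_0}$ via Chebyshev, smooth to $K_{x_0,\eps}$, combine Theorem \ref{thm:KLS-II} with Theorem \ref{thm:CAT} and Proposition \ref{prop:CAT}, then estimate the resulting integral of $\theta$), the heart of the computation is carried out by a genuinely different route. The paper bounds $\int \cos^{-1}\bigl(\cos(R_+\sqrt{\kappa})/\cos(d(x,x_0)\sqrt{\kappa})\bigr)\,d\mu_{K_{x_0}}$ directly by integration by parts, splitting off a boundary term $A$ and a tail integral $B$ controlled by the lower-tail Chebyshev estimate $\mu_{K_{x_0}}\{d(x_0,x)\le t\}\le \frac{4}{3}S^2/(E-t)^2$; you instead pass to the second moment via $\cos^{-1}(y)\le \pi\sqrt{(1-y)/2}$, apply Jensen to the convex function $r\mapsto \sec(r\sqrt{\kappa})$ (whose sign in the expression makes Jensen go the useful way), and then control $|\tilde E-E_{x_0}|\le S_{x_0}/\sqrt{3}$ by the between-group part of the law of total variance. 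This is a clean and arguably more elementary substitute for the paper's $A/B$ estimate; what it costs is the extra (implicit) Cauchy--Schwarz step $\E\theta\le\sqrt{\E\theta^2}$ and the conditional-expectation comparison, neither of which loses more than a constant. Your regime split also differs: the paper treats $E_{x_0}<2S_{x_0}$ via the diameter bound, Theorem \ref{thm:small-conc} with $\lambda_0=0$, and Borell's lemma, whereas you dispatch $S_{x_0}>E_{x_0}$ by importing the unconditional bound $D_{Che}(K)\ge c/E_{x_0}$ from \cite{EMilman-RoleOfConvexity}; both are legitimate. Two small points to tighten: (i) hypothesis (1) bounds the sectional curvature only on $K_{x_0}$, not on $K_{x_0,\eps}$, so as in the paper you should take $\kappa_{x_0,\eps}\ge\kappa_{x_0}$ with $\kappa_{x_0,\eps}\to\kappa_{x_0}$ before invoking Theorem \ref{thm:CAT}; (ii) your transfer step gives $\mu_{K_{x_0}}(A\cap K_{x_0})\in[1/4,2/3]$ rather than $[1/3,2/3]$ (since $\mu_K(K_{x_0})$ is only bounded below by $3/4$), which is harmless, and the comparison of Minkowski content on $K_{x_0}$ with $\H^{n-1}(\partial A\cap K)$ deserves the regularity discussion the paper delegates to \cite[Lemma 5.2]{EMilman-RoleOfConvexity}.
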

\noindent As already mentioned, the distinction between $K_{x_0}$ and $K_{x_0,\eps}$ is for technical reasons which we do not care to resolve here, and the reader should morally disregard this distinction.
\begin{proof}
Given a Borel set $\Omega \subset (M,g)$ with $vol_M(\Omega)>0$, denote $\mu_{\Omega} = vol_M|_{\Omega} / vol_M(\Omega)$. Let $x_0 \in M$ satisfy the above assumptions, and note that by Chebyshev's inequality, $\mu_K(K_{x_0}) \geq 3/4$, and hence:
\begin{equation} \label{eq:goal2}
D_{Che}(K,d,\mu_K) \geq D_{Che}(K_{x_0},d,\mu_{K_{x_0}}) /2 ~;
\end{equation}
this follows by the argument described in the proof of \cite[Lemma 5.2]{EMilman-RoleOfConvexity}, which only uses the fact that $D_{Che}(K,d,\mu_K) = 2 \I_{(K,d,\mu_K)}(1/2)$ in our setting (as explained in the proof of Theorem \ref{thm:small-conc}). So it remains to bound $D_{Che}(K_{x_0},d,\mu_{K_{x_0}})$. Recall that for small enough $\eps>0$, $K_{x_0,\eps}$ was assumed geodesically convex and having $C^2$ smooth boundary. By an approximation argument described in \cite[Theorem 6.10]{EMilman-RoleOfConvexity},
$D_{Che}(K_{x_0},d,\mu_{K_{x_0}}) = \lim_{\eps \rightarrow 0} D_{Che}(K_{x_0,\eps},d,\mu_{K_{x_0,\eps}})$, so it is enough to bound the latter expression.

Consider first the case that $E_{x_0} < 2 S_{x_0}$. Then $K_{x_0}$, and consequently $K_{x_0,\eps}$ for small enough $\eps>0$, have diameter smaller than $8S_{x_0}$.
Applying e.g. Theorem \ref{thm:small-conc} to $K_{x_0,\eps}$ with $r_0 = 8S_{x_0}$ and $\lambda_0 = 0$, and passing to the limit as $\eps\rightarrow 0$, we conclude that $D_{Che}(K_{x_0},d,\mu_{K_{x_0}}) \geq 1/(8S_{x_0})$. If $X_0$ denotes a random point chosen according to the uniform distribution in $K_{x_0}$, then an analogue of Borell's lemma \cite{Borell-logconcave} in our setting (see e.g. \cite[Lemma 6.13]{EMilman-RoleOfConvexity}) implies that $S_{x_0} = \sqrt{\E(X_0^2) - \E(X_0)^2} \leq \sqrt{\E(X_0^2)} \leq C E_{x_0}$, for some universal constant $C>0$. This confirms the asserted bound in the case that $E_{x_0} < 2 S_{x_0}$.

We proceed assuming that $E_{x_0} \geq 2 S_{x_0}$.
It is clear by compactness of $\overline{K_{x_0}}$ that we can find an upper bound $\kappa_{x_0,\eps} \geq \kappa_{x_0}$ on the sectional curvatures of $(M,g)$ in $K_{x_0,\eps}$, so that $\lim_{\eps \rightarrow 0} \kappa_{x_0,\eps} = \kappa_{x_0}$, and hence for small enough $\eps>0$:
\[
 R_{x_0,\eps} := R_{x_0} + \eps \leq \frac{\pi}{2 \sqrt{\kappa_{x_0,\eps}}} ~.
\]
Consequently, the validity of one of the assumptions (i),(ii),(iii),(iv) above ensures by Theorem \ref{thm:CAT} that for this range of $\eps>0$, $(K_{x_0,\eps},d)$ is a $CAT(\kappa_{x_0,\eps})$ space. Combining the bound obtained in Theorem \ref{thm:KLS-II} with the bound on $\theta_{K_{x_0,\eps}}$ given by Proposition \ref{prop:CAT}, we obtain:
\begin{eqnarray*}
 D_{Che}(K_{x_0,\eps},d,\mu_{K_{x_0,\eps}}) & \geq & \frac{c}{\int \theta_{K_{x_0,\eps}}(x) d\mu_{K_{x_0,\eps}}(x)} \\
&\geq & \frac{c}{\int \frac{2}{\sqrt{\kappa_{x_0,\eps}}} \cos^{-1}\brac{ \frac{\cos(R_{x_0,\eps} \sqrt{\kappa_{x_0,\eps}})}{\cos(d(x,x_0) \sqrt{\kappa_{x_0,\eps}})}} d\mu_{K_{x_0,\eps}}(x)} ~.
\end{eqnarray*}
Taking the limit as $\eps \rightarrow 0$ we conclude that:
\begin{equation} \label{eq:goal}
 D_{Che}(K_{x_0},d,\mu_{K_{x_0}}) \geq \frac{\frac{c}{2} \sqrt{\kappa_{x_0}}}{\int \cos^{-1}\brac{ \frac{\cos(R_{x_0} \sqrt{\kappa_{x_0}})}{\cos(d(x,x_0) \sqrt{\kappa_{x_0}})}} d\mu_{K_{x_0}}(x)} ~.
\end{equation}

It remains to evaluate the integral appearing in the denominator. Denote for convenience $\kappa=\kappa_{x_0}$, $E=E_{x_0}$, $S=S_{x_0}$, $R_+ = R_{x_0}$ and $R_{-} = E - 2S$. Integrating by parts, we obtain:
\begin{multline*}
\int_0^{R_+} \cos^{-1}\brac{ \frac{\cos(R_+ \sqrt{\kappa})}{\cos(t \sqrt{\kappa})}} d (\mu_{K_{x_0}}\set{d(x_0,x)\leq t}) \\
\leq \cos^{-1}\brac{ \frac{\cos( R_+ \sqrt{\kappa} )}{\cos(R_{-} \sqrt{\kappa})}} +\int_0^{R_-} \frac{\sqrt{\kappa} \cos(R_+ \sqrt{\kappa})\tan(t \sqrt{\kappa})}{\sqrt{\cos^2(t \sqrt{\kappa}) - \cos^2(R_+ \sqrt{\kappa})}} \mu_{K_{x_0}}\set{d(x_0,x)\leq t} dt ~.
\end{multline*}
Denote the first and second terms on the right-hand side above by $A$ and $B$ respectively.

Using that there exists a constant $C>1$ so that $\cos^{-1}(x) \leq C \sqrt{1-x}$ for all $x \in [0,1]$, and applying the mean-value theorem, we bound:
\[
A \leq  C \sqrt{\frac{\cos(R_{-} \sqrt{\kappa}) - \cos(R_{+} \sqrt{\kappa})}{\cos(R_{-} \sqrt{\kappa})}} \leq  C \sqrt{ \frac{4 S \sqrt{\kappa} \sin(R_+ \sqrt{\kappa})}{\cos(E\sqrt{\kappa})}} \leq C' \frac{ \sqrt{\kappa} \sqrt{S E} }{\sqrt{1 - \frac{2}{\pi} E \sqrt{\kappa}}} ~.
\]
Using Chebyshev's inequality (for $t < R_+$):
\[
\mu_{K_{x_0}}\set{d(x_0,x)\leq t} = \frac{\mu_K \set{d(x_0,x)\leq t}}{\mu_K(K_{x_0})}
\leq \frac{4}{3} \frac{S^2}{(E-t)^2} ~,
\]
we verify a similar bound on $B$:
\begin{eqnarray*}
\frac{3}{4} B & \leq & \int_0^{R_-} \frac{\sqrt{\kappa} \cos(R_+ \sqrt{\kappa})\tan(t \sqrt{\kappa})}{\sqrt{\cos^2(t \sqrt{\kappa}) - \cos^2(R_+ \sqrt{\kappa})}} \frac{S^2}{(E-t)^2} dt \\
&\leq & \frac{\sqrt{\kappa} \cos(R_+ \sqrt{\kappa})\tan(R_{-} \sqrt{\kappa})}{\sqrt{\cos^2(R_{-} \sqrt{\kappa}) - \cos^2(R_+ \sqrt{\kappa})}} \int_{2S}^\infty \frac{S^2}{t^2} dt\\
& \leq & \frac{ \sqrt{\kappa} \cos(R_+ \sqrt{\kappa})\sin(R_{-} \sqrt{\kappa})}{\sqrt{4 S \sqrt{\kappa} \sin(R_{-} \sqrt{\kappa}) }  \cos^{3/2}(R_{-} \sqrt{\kappa}) } \frac{S}{2} \\
& \leq & \frac{\sqrt{\kappa} \sqrt{S} \sqrt{R_{-}}}{4 \sqrt{\cos(E \sqrt{\kappa})}} \leq  \frac{\sqrt{\kappa} \sqrt{S E}}{4 \sqrt{1 - \frac{2}{\pi} E \sqrt{\kappa}}} ~.
\end{eqnarray*}
Plugging these estimates into (\ref{eq:goal}) and (\ref{eq:goal2}), the assertion of the theorem follows.
\end{proof}

\begin{rem}
It is actually possible to replace the requirements (a) and (b) from $K_{x_0,\eps}$ by the requirements that:
\renewcommand{\labelenumi}{(\alph{enumi}')}
\begin{enumerate}
\item The volume of the symmetric difference of $K_{x_0,\eps}$ and $K_{x_0}$ tends to $0$ as $\eps \rightarrow 0$ ;
\item $\partial K_{x_0,\eps}$ is only $C^{1,1}$ smooth ;
\end{enumerate}
\renewcommand{\labelenumi}{(\arabic{enumi})}
we omit the details. In particular, we believe that if $K_{x_0}$ is (strongly) geodesically convex and is bounded away from its cut-locus, taking $K_{x_0,\eps} := (K_{x_0})^d_\eps$, for which (a') and (b') hold, should still be (strongly) geodesically convex, allowing a simplification in the formulation of Theorems \ref{thm:main2} and \ref{thm:main2'}.
\end{rem}

\bibliographystyle{amsplain}

\def\cprime{$'$}
\providecommand{\bysame}{\leavevmode\hbox to3em{\hrulefill}\thinspace}
\providecommand{\MR}{\relax\ifhmode\unskip\space\fi MR }
\providecommand{\MRhref}[2]{%
  \href{http://www.ams.org/mathscinet-getitem?mr=#1}{#2}
}
\providecommand{\href}[2]{#2}

\end{document}